\newtheorem{theorem}{Theorem}[section]
\theoremstyle{plain}
\newtheorem{corollary}[theorem]{Corollary}
\numberwithin{equation}{section}
\begin{document}
\title[An observation on initially $\kappa $-compact spaces ]{An observation
on initially $\kappa $-compact spaces }
\author{\c{C}etin Vural }
\address{Gazi Universitesi, Fen Fakultesi, Matematik Bolumu, 06500
Teknikokullar, Ankara, Turkey}
\email{cvural@gazi.edu.tr}
\date{}
\subjclass[2010]{ 54D20, 54D30, 54A25, 22C05}
\keywords{Initially $\kappa $-compact, countably compact, compact, $%
G_{\kappa }$-diagonal, topological group.}
\thanks{The author would like to thank S\"{u}leyman \"{O}nal for fruitful
discussions and Hasan G\"{u}l for the first draft of manuscript, and the
referee for valuable remarks.}

\begin{abstract}
In \cite{Chaber}, Chaber has proved that countably compact spaces with a
quasi-$G_{\delta }$-diagonal are compact. We prove that initially $\kappa $%
-compact spaces with a quasi-$G_{\kappa }$-diagonal are compact, for any
infinite cardinal $\kappa .$
\end{abstract}

\maketitle

\section{Introduction and Terminology}

Chaber, in \cite{Chaber} has proved that countably compact spaces with a
(quasi-) $G_{\delta }$-diagonal are compact. We observe that this result may
be generalised by using any infinite cardinal instead of the first infinite
cardinal $\omega .$ For that purpose, we regard countable compactness as
initial $\omega $-compactness and extend it naturally to initial $\kappa $%
-compactness, and the quasi-$G_{\delta }$-diagonal property to quasi-$%
G_{\kappa }$-diagonal property which allows us to conclude that initially $%
\kappa $-compact spaces with a quasi-$G_{\kappa }$-diagonal are still
compact, for any infinite cardinal $\kappa .$

Throughout this paper, $\kappa $ is an infinite cardinal, $\omega $ is the
first infinite ordinal and cardinal, and $X$ is a topological space. Let us
recall some basic definitions. If $A\subseteq X$ and $\mathcal{H}$ is a
family of subsets of $X,$ then \textit{the star of }$\mathcal{H}$ \textit{%
about }$A$ is denoted by $st\left( A,\mathcal{H}\right) =\bigcup \left\{
H\in \mathcal{H}:H\cap A\neq \emptyset \right\} .$ For $x\in X,$ we write $%
st\left( x,\mathcal{H}\right) $ instead of $st\left( \left\{ x\right\} ,%
\mathcal{H}\right) .$ A transfinite sequence $\left\{ \mathcal{O}_{\alpha
}:\alpha \in \kappa \right\} $ of collections of open subsets of $X$ is said
to be a \textit{quasi-}$\mathit{G}_{\kappa }$\textit{-diagonal sequence for }%
$X$ if for each $x,y\in X$ with $x\neq y,$ there exists an $\alpha \in
\kappa $ with $x\in \bigcup \left\{ O:O\in \mathcal{O}_{\alpha }\right\} $
and $y\notin st\left( x,\mathcal{O}_{\alpha }\right) .$ If each $\mathcal{O}%
_{\alpha }$ is a cover of $X,$ then the quasi-$G_{\kappa }$-diagonal
sequence $\left\{ \mathcal{O}_{\alpha }:\alpha \in \kappa \right\} $ is
called a $\mathit{G}_{\kappa }$\textit{-diagonal sequence for }$X.$ For $%
\kappa =\omega ,$ a (quasi-)$G_{\kappa }$-diagonal sequence is called a
(quasi-)$G_{\delta }$-diagonal sequence.

It is said that $X$ has a $\mathit{G}_{\kappa }$\textit{-diagonal} if the
diagonal $\Delta _{X}=\left\{ \left( x,x\right) :x\in X\right\} $ of $X$ is
a $G_{\kappa }$-set in $X\times X,$ (that is, the set $\Delta _{X}$ is the
intersection of $\kappa $-many open sets of $X\times X$). In \cite{Ceder},
it was shown that $X$ has a $G_{\delta }$-diagonal if and only if $X$ has a $%
G_{\delta }$-diagonal sequence. It can easily be seen that $X$ has a $%
G_{\kappa }$-diagonal if and only if $X$ has a $G_{\kappa }$-diagonal
sequence. \textit{The diagonal number} $\Delta \left( X\right) $ of a space $%
X$ is the smallest infinite cardinal $\kappa $ such that the diagonal $%
\Delta _{X}$ of $X$ is the intersection of $\kappa $-many open sets of $%
X\times X.$ Thus, if $X$ has a $G_{\kappa }$-diagonal, then $\Delta \left(
X\right) \leq \kappa .$

Recall that a topological space $X$ is said to be \textit{initially }$\kappa 
$\textit{-compact} if every open cover of $X$ of cardinality not exceeding $%
\kappa $ has a finite subcover. For $\kappa =\omega ,$ initial $\omega $%
-compactness is equivalent to countable compactness. For a survey of initial 
$\kappa $-compactness see \cite{Stephenson}.\bigskip

The cardinality of a set $A$ is denoted by $\left\vert A\right\vert $.
Recall that the the \textit{weight} of $X$ is denoted by $w(X)$ and is
defined as the smallest cardinal number of the form $\left\vert \mathcal{B}%
\right\vert ,$ where $\mathcal{B}$ is a base for $X.$

\section{Main Result}

The following statement generalizes Chaber's theorem in \cite{Chaber}.

\begin{theorem}
An initially $\kappa $-compact space with a quasi $G_{\kappa }$-diagonal is
compact.
\end{theorem}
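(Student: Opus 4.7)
The plan is to prove the theorem by contradiction, essentially transcribing Chaber's argument in \cite{Chaber} with the obvious cardinal replacements. Suppose $X$ is initially $\kappa$-compact with quasi-$G_{\kappa}$-diagonal sequence $\{\mathcal{O}_{\alpha}:\alpha<\kappa\}$ and is not compact; fix an open cover $\mathcal{U}$ of $X$ admitting no finite subcover. The goal is to derive a contradiction with initial $\kappa$-compactness.

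A preliminary observation will be used throughout. If $|X|\geq 2$ (otherwise the theorem is trivial), then for every $x\in X$ the set $A_{x}=\{\alpha<\kappa:x\in\bigcup\mathcal{O}_{\alpha}\}$ is nonempty, and
$$\bigcap_{\alpha\in A_{x}}st(x,\mathcal{O}_{\alpha})=\{x\},$$
with each star in the intersection an open neighbourhood of $x$. So every point of $X$ is a $G_{\kappa}$-point and the stars act as a pseudobase at $x$.

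The main step is a transfinite recursion of length at most $\kappa^{+}$. At stage $\alpha$ one selects a point $x_{\alpha}\in X$ avoiding the open family collected at earlier stages, a member $U_{\alpha}\in\mathcal{U}$ containing $x_{\alpha}$, and an index $\gamma_{\alpha}\in A_{x_{\alpha}}$ chosen so that the star $st(x_{\alpha},\mathcal{O}_{\gamma_{\alpha}})$ is small enough relative to $U_{\alpha}$ and the earlier data. The recursion continues as long as the accumulated family $\{U_{\beta}\cup st(x_{\beta},\mathcal{O}_{\gamma_{\beta}}):\beta<\alpha\}$ fails to cover $X$. Since this family has cardinality $\leq\kappa$ for every $\alpha<\kappa^{+}$, initial $\kappa$-compactness forces a finite subcover once it does cover; the careful choice of stars allows this finite subcover to be converted into a finite subcover drawn from $\mathcal{U}$ alone, contradicting our standing assumption on $\mathcal{U}$.

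The principal obstacle is arranging the indices $\gamma_{\alpha}$ so that the stars $st(x_{\alpha},\mathcal{O}_{\gamma_{\alpha}})$ can be discarded in favour of the corresponding $U_{\alpha}$'s at the conclusion, yielding a finite subcover of $\mathcal{U}$ itself. This is exactly the step at which the quasi-$G_{\kappa}$-diagonal hypothesis is used in an essential way, and it is the delicate part of Chaber's original construction. Verifying that his bookkeeping goes through with $\kappa$ in place of $\omega$ and $\kappa^{+}$ in place of $\omega_{1}$ amounts to routine cardinal arithmetic — valid because $|\alpha|\leq\kappa$ whenever $\alpha<\kappa^{+}$ — which is presumably why the author calls the generalisation an ``observation''.
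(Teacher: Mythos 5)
Your proposal is a plan rather than a proof: the two points where all the mathematical content lies are exactly the points you defer. First, you never say what ``small enough relative to $U_{\alpha}$ and the earlier data'' means, nor how a finite subcover of the mixed family $\left\{ U_{\beta }\cup st\left( x_{\beta },\mathcal{O}_{\gamma _{\beta }}\right) :\beta <\alpha \right\} $ gets converted into a finite subcover of $\mathcal{U}$ alone; you yourself identify this as ``the delicate part'' and then appeal to the claim that Chaber's bookkeeping ``presumably'' goes through. That appeal is not an argument, and it is precisely the step one would need to check. Second, your recursion has no termination analysis: you only invoke initial $\kappa $-compactness ``once it does cover,'' but nothing in the sketch rules out the recursion running through all of $\kappa ^{+}$ stages without the accumulated family ever covering $X$, at which point the family has size $\kappa ^{+}$ and initial $\kappa $-compactness says nothing. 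In arguments of this type the non-termination case is where the compactness-plus-diagonal hypotheses do real work (typically via an accumulation-point or free-sequence argument), and your sketch is silent on it. The preliminary observation that $\bigcap_{\alpha \in A_{x}}st\left( x,\mathcal{O}_{\alpha }\right) =\left\{ x\right\} $ is correct but is never actually used in the outline.

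For comparison, the paper's proof is concrete and quite different in structure. It takes a \emph{maximal} open cover $\mathcal{M}$ with no finite subcover, notes that initial $\kappa $-compactness forces $\mathcal{M}$ to have no subcover of size $\leq \kappa $, and then shows, using the quasi-$G_{\kappa }$-diagonal sequence together with maximality, that every $x$ admits an index $\alpha \left( x\right) $ with $x\in \bigcup \mathcal{O}_{\alpha \left( x\right) }$ and $st\left( x,\mathcal{O}_{\alpha \left( x\right) }\right) \in \mathcal{M}$. Partitioning $X$ into the sets $Y_{\alpha }=\left\{ x:\alpha \left( x\right) =\alpha \right\} $, it then uses countable compactness (which initial $\kappa $-compactness implies, and which is needed here beyond the cover-counting role of $\kappa $) to show each $Y_{\alpha }$ is covered by finitely many members of $\mathcal{M}$, giving a subcover of $\mathcal{M}$ of size $\leq \kappa $ and a contradiction. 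Note that even this ``observation'' requires the two distinct uses of the compactness hypothesis to be separated carefully; the generalisation is not mere cardinal substitution into a recursion of length $\kappa ^{+}$, and your write-up as it stands does not establish the theorem.
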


\begin{proof}
Let $X$ be an initially $\kappa $-compact space, and let $\left\{ \mathcal{O}%
_{\alpha }:\alpha \in \kappa \right\} $ be a quasi-$G_{\kappa }$-diagonal
sequence for $X$. Suppose that $X$ is not compact. Let $\mathcal{M}$ be a
maximal open cover of $X$ without any finite subcover, (that is, $\mathcal{M}
$ is an open cover of $X$ without any finite subcover, and the cover $%
\mathcal{M\cup }\left\{ O\right\} $ has a finite subcover for any open
subset $O$ of $X$ with $O\notin \mathcal{M}$). Since the space $X$ is
initially $\kappa $-compact, $\mathcal{M}$ has no subcover of cardinality at
most $\kappa $. We claim that for each $x$ in $X$ there exists an $\alpha
\left( x\right) \in \kappa $ such that $x\in \bigcup \mathcal{O}_{\alpha
\left( x\right) }$ and $st\left( x,\mathcal{O}_{\alpha \left( x\right)
}\right) \in \mathcal{M}$. To prove this claim, suppose that $st\left( x,%
\mathcal{O}_{\alpha }\right) \notin \mathcal{M}$ for all $\alpha \in \kappa $
satisfying $x\in \bigcup \mathcal{O}_{\alpha }.$ Let $J=\left\{ \alpha \in
\kappa :x\in \bigcup \mathcal{O}_{\alpha }\right\} $. Since $st\left( x,%
\mathcal{O}_{\alpha }\right) $ is an open subset of $X$, the maximality of $%
\mathcal{M}$ gives us a finite subcover $\mathcal{H}_{\alpha }$ of the open
cover $\mathcal{V}_{\alpha }=\mathcal{M\cup }\left\{ st\left( x,\mathcal{O}%
_{\alpha }\right) \right\} ,$ for all $\alpha \in J.$ So, we have a finite
subfamily $\mathcal{W}_{\alpha }$ of $\mathcal{M}$ such that $\mathcal{H}%
_{\alpha }=\mathcal{W}_{\alpha }\mathcal{\cup }\left\{ st\left( x,\mathcal{O}%
_{\alpha }\right) \right\} ,$ for each $\alpha \in J.$ Since $\mathcal{H}%
_{\alpha }$ is a cover of $X$ and $\left\{ \mathcal{O}_{\alpha }:\alpha \in
\kappa \right\} $ is a quasi-$G_{\kappa }$-diagonal sequence for $X,$ we
have $X\backslash \left\{ x\right\} \subseteq \bigcup\nolimits_{\alpha \in
J}\left( \bigcup \mathcal{W}_{\alpha }\right) .$ Take an $M\in \mathcal{M}$
with $x\in M.$ It is clear that the family $\left\{ W:W\in \mathcal{W}%
_{\alpha },\alpha \in J\right\} \cup \left\{ M\right\} $ is a subcover of $%
\mathcal{M}$ and its cardinality is at most $\kappa $. But this contradicts
the fact $\mathcal{M}$ has no subcover of cardinality at most $\kappa $.
Hence, our claim is true. So, choose an $\alpha \left( x\right) \in \kappa $
such that $x\in \bigcup \mathcal{O}_{\alpha \left( x\right) }$ and $st\left(
x,\mathcal{O}_{\alpha \left( x\right) }\right) \in \mathcal{M}$, for each $%
x\in X.$ Let $Y_{\alpha }=\left\{ x\in X:\alpha \left( x\right) =\alpha
\right\} .$ Obviously, $X=\bigcup\nolimits_{\alpha \in \kappa }Y_{\alpha }.$ 
\newline
Now, we claim that $Y_{\alpha }$ is covered by a finite subfamily of $%
\mathcal{M}$, for each $\alpha \in \kappa $. Take an $\alpha \in \kappa .$
We have two cases:\newline
Case 1: Suppose $st\left( Y_{\alpha },\mathcal{O}_{\alpha }\right) \in 
\mathcal{M}$. Then $Y_{\alpha }$ is covered by $\left\{ st\left( Y_{\alpha },%
\mathcal{O}_{\alpha }\right) \right\} .$\newline
Case 2: Suppose $st\left( Y_{\alpha },\mathcal{O}_{\alpha }\right) \notin 
\mathcal{M}$. In this case, by the maximality of $\mathcal{M}$, we have a
finite subfamily $\mathcal{S}$ of $\mathcal{M}$ such that $X=st\left(
Y_{\alpha },\mathcal{O}_{\alpha }\right) \cup \left( \bigcup \mathcal{S}%
\right) .$ Now, we claim that if $Y_{\alpha }\backslash \left( \bigcup 
\mathcal{S}\right) \neq \emptyset ,$ then there exists a finite subset $%
\left\{ x_{0},x_{1},...,x_{n}\right\} $ of $Y_{\alpha }$ such that $%
Y_{\alpha }\backslash \left( \bigcup \mathcal{S}\right) \subseteq
\bigcup\nolimits_{i=0}^{n}st(x_{i},\mathcal{O}_{\alpha }).$ Indeed, take a
point $x_{0}\in Y_{\alpha }\backslash \left( \bigcup \mathcal{S}\right) .$
If $Y_{\alpha }\backslash \left( \bigcup \mathcal{S}\right) \backslash
st(x_{0},\mathcal{O}_{\alpha })\neq \emptyset ,$ we can choose a point $%
x_{1}\in Y_{\alpha }\backslash \left( \bigcup \mathcal{S}\right) \backslash
st(x_{0},\mathcal{O}_{\alpha }).$ If we can choose inductively a point $%
x_{n}\in Y_{\alpha }\backslash \left( \bigcup \mathcal{S}\right) \backslash
\bigcup\nolimits_{m<n}st(x_{m},\mathcal{O}_{\alpha })$, for each $n\in 
\mathcal{\omega },$ then we obtain a sequence $\left\{ x_{n}:n\in \mathcal{%
\omega }\right\} $ in the closed subspace $X\backslash \left( \bigcup 
\mathcal{S}\right) $ of $X$. Since initial $\kappa $-compactness is
hereditary with respect to closed subsets and every initially $\kappa $%
-compact space is countably compact, the sequence $\left\{ x_{n}:n\in 
\mathcal{\omega }\right\} $ has an accumulation point $x$ in $X\backslash
\left( \bigcup \mathcal{S}\right) .$ Since $X=st\left( Y_{\alpha },\mathcal{O%
}_{\alpha }\right) \cup \left( \bigcup \mathcal{S}\right) ,$ we have $x\in
st\left( Y_{\alpha },\mathcal{O}_{\alpha }\right) .$ So, there exists $O\in 
\mathcal{O}_{\alpha }$ with $x\in O.$ Note that $\left\vert O\cap \left\{
x_{n}:n\in \mathcal{\omega }\right\} \right\vert \leq 1.$ But this
contradicts the fact that $x$ is an accumulation point of the sequence $%
\left\{ x_{n}:n\in \mathcal{\omega }\right\} .$ So, it must be $Y_{\alpha
}\backslash \left( \bigcup \mathcal{S}\right) \backslash \left(
\bigcup\nolimits_{i=0}^{n}st(x_{i},\mathcal{O}_{\alpha })\right) =\emptyset $
for some $n\in \mathcal{\omega }$. Therefore the family $\left\{ st\left(
x_{0},\mathcal{O}_{\alpha }\right) ,st\left( x_{1},\mathcal{O}_{\alpha
}\right) ,...,st\left( x_{n},\mathcal{O}_{\alpha }\right) \right\} \cup
\left( \bigcup \mathcal{S}\right) $ covers $Y_{\alpha }.$\newline
Hence, each $Y_{\alpha }$ is covered by a finite subfamily of $\mathcal{M}$.
Thus, $X$ is covered by a subcover of $\mathcal{M}$ of cardinality at most $%
\kappa .$ This contradiction enables us to claim that $X$ is compact.\bigskip
\end{proof}

Since $\Delta \left( X\right) =w\left( X\right) $, for a compact Hausdorff
space $X$ (for example, in \cite[7.6. Corollary]{Hodel}), we can assert the
following.\medskip 

\begin{corollary}
If $X$ is a Hausdorff initially $\kappa $-compact space with a $G_{\kappa }$%
-diagonal, we have $w\left( X\right) \leq \kappa .\bigskip $
\end{corollary}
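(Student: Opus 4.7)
The plan is to reduce the corollary to the theorem above together with the cited identity $\Delta(X) = w(X)$ for compact Hausdorff spaces. The corollary is essentially a one-line consequence, so the proposal is short: verify the hypotheses of the theorem, invoke it, then invoke the Hodel result.

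First I would observe that any $G_{\kappa}$-diagonal sequence is, by definition, a quasi-$G_{\kappa}$-diagonal sequence (the extra condition being only that each $\mathcal{O}_{\alpha}$ covers $X$). Hence the assumption that $X$ has a $G_{\kappa}$-diagonal implies, via the equivalence recorded in the introduction (a space has a $G_{\kappa}$-diagonal iff it admits a $G_{\kappa}$-diagonal sequence), that $X$ admits a quasi-$G_{\kappa}$-diagonal sequence. Combined with initial $\kappa$-compactness, the theorem yields that $X$ is compact. Since $X$ is also Hausdorff by hypothesis, $X$ is compact Hausdorff.

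Next I would apply the cited result $\Delta(X) = w(X)$ for compact Hausdorff spaces. The existence of a $G_{\kappa}$-diagonal on $X$ gives $\Delta(X) \leq \kappa$ directly from the definition of the diagonal number, so $w(X) = \Delta(X) \leq \kappa$, as required.

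There is essentially no obstacle here; the only thing worth flagging is the (trivial) verification that having a $G_{\kappa}$-diagonal is strictly stronger than having a quasi-$G_{\kappa}$-diagonal, so that the theorem applies. All the real work is done by the theorem and by the external result $\Delta(X) = w(X)$ for compact Hausdorff spaces.
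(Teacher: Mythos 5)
Your proposal is correct and follows exactly the route the paper intends: apply Theorem 2.1 (noting a $G_{\kappa}$-diagonal sequence is in particular a quasi-$G_{\kappa}$-diagonal sequence) to get compactness, then use $\Delta(X)=w(X)$ for compact Hausdorff spaces together with $\Delta(X)\leq\kappa$. The only nitpick is your phrase ``strictly stronger'' --- all you need (and all you verify) is that the $G_{\kappa}$-diagonal property implies the quasi-$G_{\kappa}$-diagonal property; strictness is irrelevant.
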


Recall that \textit{the pseudocharacter} \textit{of} $X$ \textit{at a subset}
$A$, denoted by $\psi (A,X)$, is defined as the smallest cardinal number of
the form $\left\vert \mathcal{U}\right\vert ,$ where $\mathcal{U}$ is a
family of open subsets of $X$ such that $\bigcap \mathcal{U}=A$. If $%
A=\left\{ x\right\} $ is a singleton, then we write $\psi \left( x,X\right) $
instead of $\psi \left( \left\{ x\right\} ,X\right) .$ \textit{The
pseudocharacter} \textit{of a space} $X$ is defined to be $\psi \left(
X\right) =\sup \left\{ \psi \left( x,X\right) :x\in X\right\} $. Evidently,
the diagonal number $\Delta \left( X\right) $ of a space $X$ is equal to the
pseudocharacter of its square $X\times X$ at its diagonal $\Delta
_{X}=\left\{ \left( x,x\right) :x\in X\right\} .$\medskip

It is well known that, if $G$ is a topological group, we have $\Delta \left(
G\right) =\psi \left( G\right) .$ So, Theorem 2.1 enables us to claim the
following.\medskip

\begin{corollary}
If $G$ is an initially $\kappa $-compact topological group with $\psi \left(
G\right) \leq \kappa ,$ then $G$ is compact.
\end{corollary}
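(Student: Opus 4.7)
The plan is to reduce Corollary 2.3 directly to Theorem 2.1, using as the only non-elementary input the identity $\Delta(G)=\psi(G)$ for a topological group $G$, which the paper quotes just before the statement. Given the hypothesis $\psi(G)\leq \kappa$, this identity yields $\Delta(G)\leq \kappa$, so the diagonal $\Delta_G$ is the intersection of at most $\kappa$-many open subsets of $G\times G$; that is, $G$ has a $G_{\kappa}$-diagonal.

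Next I would invoke the remark in the introduction that $G$ has a $G_{\kappa}$-diagonal if and only if $G$ admits a $G_{\kappa}$-diagonal sequence $\{\mathcal{O}_{\alpha}:\alpha \in \kappa\}$. By definition, every $\mathcal{O}_{\alpha}$ is a cover of $G$, so the condition $x\in \bigcup \mathcal{O}_{\alpha}$ required in the definition of a quasi-$G_{\kappa}$-diagonal sequence holds automatically for every $x\in G$ and every $\alpha$. Hence the sequence furnished in the previous step is, a fortiori, a quasi-$G_{\kappa}$-diagonal sequence for $G$.

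Finally, since $G$ is initially $\kappa$-compact by hypothesis and now admits a quasi-$G_{\kappa}$-diagonal, Theorem 2.1 applies and delivers the compactness of $G$.

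There is essentially no genuine obstacle in the argument: the entire proof is the short implication chain $\psi(G)\leq \kappa \Rightarrow \Delta(G)\leq \kappa \Rightarrow G$ has a quasi-$G_{\kappa}$-diagonal $\Rightarrow G$ is compact. The only content beyond unwinding definitions is the cited theorem $\Delta(G)=\psi(G)$ for topological groups, which rests on the homogeneity of $G$ and translation invariance (using a neighbourhood base at the identity to build the required $\kappa$-many neighbourhoods of $\Delta_G$); I would treat this as a black box, exactly as the excerpt does.
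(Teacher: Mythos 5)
Your proposal is correct and follows exactly the paper's intended argument: the paper derives the corollary from the quoted identity $\Delta(G)=\psi(G)$ together with Theorem 2.1, via the observation that $\Delta(G)\leq\kappa$ gives a $G_{\kappa}$-diagonal (hence a quasi-$G_{\kappa}$-diagonal) sequence. Nothing further is needed.
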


\end{document}